\def\imod#1{\allowbreak\mkern10mu({\operator@font mod}\,\,#1)}
\newtheorem{theorem}{Theorem}
\theoremstyle{definition}
\theoremstyle{remark}
\newtheorem{remark}{Remark}
\theoremstyle{remark}
\numberwithin{equation}{section}
    \DeclareMathOperator{\proj}{proj}
    \newcommand{\pinf}{[\omega]^\omega}
\title[$\Sigma^1_2$ and $\Pi^1_1$ mad families]{$\Sigma^1_2$ and $\Pi^1_1$ mad families}
\author{Asger T\"ornquist}
\address{Department of Mathematical Sciences, University of Copenhagen, Universitetsparken 5, 2100 Copenhagen, Denmark}
\email{asger@logic.univie.ac.at}
\thanks{Research supported by Denmark's Council for Independent Research (Natural Sciences Division), grant no. 10-082689/FNU.}
\date{April 22, 2013}
\begin{document}

\begin{abstract}
We answer in the affirmative the following question of J\"org Brendle: If there is a $\Sigma^1_2$ mad family, is there then a $\Pi^1_1$ mad family? 
\end{abstract}
\maketitle

J\"org Brendle asked his question about $\Sigma^1_2$ and $\Pi^1_1$ mad families in his talk (see \cite{oberwolfach}) at the Set Theory meeting at Oberwolfach in January, 2011. The question was motivated by the recent body of work on the structure of definable mad families by an array of authors, including Fischer, Friedman and Zdomskyy \cite{frzd10, fifrzd11}, as well as Brendle and Khomskii, \cite{brkh}, who have later used the solution given below in their work.

\begin{theorem}
If there is a $\Sigma^1_2(a)$ mad family then there is a $\Pi^1_1(a)$ mad family.
\end{theorem}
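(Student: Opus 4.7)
The plan is to reduce the $\Sigma^1_2(a)$ definition of $\mathcal{A}$ to a $\Pi^1_1(a)$ definition of a new mad family by absorbing the existential witness into the almost-disjoint structure itself. Write $\mathcal{A} = \{x \in [\omega]^\omega : \exists y \in \omega^\omega\, \psi(x,y)\}$ with $\psi \in \Pi^1_1(a)$, and apply Kondo's $\Pi^1_1$-uniformization to obtain a $\Pi^1_1(a)$ formula $\psi^*$ selecting a unique witness $y_x$ for each $x \in \mathcal{A}$. By replacing $\psi^*(x,y)$ with $\psi^*(x,(y)_1) \wedge y = x \oplus (y)_1$, which is still $\Pi^1_1(a)$, one may further assume that $y_x$ codes $x$, so the map $x \mapsto y_x$ is injective.

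Next, encode each pair $(x, y_x)$ as a single infinite subset of $\omega$: fix a bijection $\omega \cong \omega \sqcup \omega^{<\omega}$ and set $\tilde x = x \cup \{y_x \upharpoonright n : n \in \omega\}$, placing $x$ in the $\omega$-slot and the trace of initial segments of $y_x$ in the $\omega^{<\omega}$-slot. Since $x \mapsto \tilde x$ is continuous and injective on $\mathcal{A}$, the image $\mathcal{A}' = \{\tilde x : x \in \mathcal{A}\}$ is a $\Pi^1_1(a)$ subset of $[\omega]^\omega$. Almost disjointness splits cleanly: for $x \neq x'$ in $\mathcal{A}$, the $\omega$-part of $\tilde x \cap \tilde{x'}$ is $x \cap x'$, finite by AD of $\mathcal{A}$, while the $\omega^{<\omega}$-part consists of common initial segments of the distinct sequences $y_x$ and $y_{x'}$, also finite.

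The main obstacle is verifying maximality of $\mathcal{A}'$. Given infinite $c \in [\omega]^\omega$ almost disjoint from every $\tilde x$, split $c = c_0 \sqcup c_1$ with $c_0 \subseteq \omega$ and $c_1 \subseteq \omega^{<\omega}$. When $c_0$ is infinite, maximality of $\mathcal{A}$ directly yields some $x \in \mathcal{A}$ with $c_0 \cap x$ infinite, so $c \cap \tilde x$ is infinite --- contradiction. The hard case is $c_0$ finite with $c_1$ infinite: one must pass from an infinite set of finite sequences avoiding every trace $\{y_x \upharpoonright n : n \in \omega\}$ back to an infinite subset of $\omega$ almost disjoint from all of $\mathcal{A}$, thereby contradicting maximality of $\mathcal{A}$. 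I expect this to require either a refinement of the encoding that spreads $x$ redundantly across the $\omega^{<\omega}$-slot, forcing any infinite $c_1$ to meet some trace, or else an auxiliary appeal to the Mansfield--Solovay dichotomy together with Mathias's theorem (no $\Sigma^1_1$ mad family) to reduce to the case $\omega_1 = \omega_1^{L[a]}$ and then absorb the construction into a standard $L[a]$-construction of a $\Pi^1_1(a)$ mad family. This residual case is the genuine technical core of the argument.
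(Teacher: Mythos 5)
Your overall strategy---uniformize the $\Pi^1_1(a)$ relation projecting to the mad family, then fold the unique witness $y_x$ into the set itself so that it becomes recursively recoverable and the family becomes $\Pi^1_1(a)$---is exactly the strategy of the paper. But the step you flag as ``the genuine technical core'' is a genuine gap, and with your particular encoding it is not merely unresolved but unfixable: the family $\mathcal A'=\{\tilde x : x\in\mathcal A\}$ is provably \emph{not} maximal. Take $c$ to be an infinite antichain in the $\omega^{<\omega}$-slot, e.g.\ the set of all length-one sequences $\langle n\rangle$. Each trace $\{y_x\restrict n : n\in\omega\}$ is a chain, so it meets this antichain in at most one point, and the $\omega$-part of $c$ is empty; hence $|c\cap\tilde x|\le 1$ for every $x\in\mathcal A$. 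So no argument can ``pass back to an infinite subset of $\omega$ almost disjoint from all of $\mathcal A$'' in this case---there is nothing to pass back to. Your second fallback (Mansfield--Solovay plus Mathias plus an $L[a]$ construction) is not carried out and is far from routine; Mathias's theorem concerns analytic mad families and gives no direct reduction to $\omega_1=\omega_1^{L[a]}$ here.

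Your first suggested repair---``spread $x$ redundantly'' so that the witness-coding part cannot be avoided---is the right instinct, and it is precisely what the paper does, in a sharp form. Work in $3\times\omega$ and, for each $(x,y)\in F_0$, put \emph{two} sets into the new family: $g_0(x,y)=\{0\}\times x\cup\{2\}\times\{x_n:n\in y\}$ and $g_1(x,y)=\{1\}\times x\cup\{2\}\times\{x_n:n\notin y\}$. The witness $y$ is coded not by a trace in a fresh coordinate but as a subset of $x$ itself, and the two coding parts partition $\{2\}\times x$. Now if $z$ is infinite, some projection $p_i(z)$ is infinite; for $i\in\{0,1\}$ maximality of $\mathcal A_0$ applies directly, and for $i=2$ one finds $x$ with $p_2(z)\cap x$ infinite and then $z$ must meet $g_0(x,y)$ or $g_1(x,y)$ infinitely, since their third rows cover $\{2\}\times x$. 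One further caution on your definability step: the map $x\mapsto\tilde x$ is defined via the $\Pi^1_1$ uniformizing function and so is not Borel; ``continuous injective image'' is not the right justification. The correct one (used in the paper via Spector--Gandy) is that the witnesses $x,y$ are $\Delta^1_1$ in---indeed recursive in---the coded set $z$, so the existential quantifier over them can be bounded and the definition stays $\Pi^1_1$.
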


\begin{proof}
The proof is short and simple. For $x\in \pinf$, let $x_n$ denote the $(n+1)$st element of $x$. We define two functions $g_i:(\pinf)^2\to \mathcal P(3\times\omega)$, $i\in\{0,1\}$, as follows:
\begin{align*}
&g_0(x,y)=\{0\}\times x\cup \{2\}\times\{x_n:n\in y\},\\
&g_1(x,y)=\{1\}\times x\cup \{2\}\times\{x_n:n\notin y\}.
\end{align*}
Now let $\mathcal A_0\subseteq\pinf$ be a $\Sigma^1_2$ mad family, and let $F_0\subseteq (\pinf)^2$ be $\Pi^1_1$ such that $\proj F_0=\mathcal A_0$. By uniformization, we may assume that $F_0$ is the graph of a partial function. Define
$$
\mathcal A=g_0(F_0)\cup g_1(F_0).
$$
It is clear that $\mathcal A$ is an almost disjoint family of subsets of $3\times\omega$. Further, $\mathcal A$ is a mad family. To see this, let $p_i:3\times\omega\to\omega$ be the projection map, $i\in\{0,1,2\}$. If $z\subseteq 3\times\omega$ is an infinite set then there is $i\in 3$ such that $p_i(z)$ is infinite. Let $x\in\mathcal A_0$ be such that $p_i(z)\cap x$ is infinite, and let $y\in\pinf$ be such that $(x,y)\in F_0$. If $i\in\{0,1\}$ we have $|z\cap g_i(x,y)|=\aleph_0$. If $i=2$ then $p_2(g_0(x,y)\cup g_1(x,y))=x$, and so we must have that either $|p_2(g_0(x,y))\cap z|=\aleph_0$ or $|p_2(g_1(x,y))\cap z|=\aleph_0$.

We claim that $\mathcal A$ is $\Pi^1_1$. To see this, let $z\in\mathcal A$, and assume that there is $(x,y)\in F_0$ such that $g_0(x,y)=z$. Note that $p_0(z)=x$, and that from $p_2(z)$ and $p_0(z)$ we clearly can recover $y$ (by using the inverse of the function $n\mapsto x_n$) in a recursive way. A similar argument applies when $z=g_1(x,y)$. Thus
$$
z\in\mathcal A\iff (\exists x,y\in\Delta^1_1(z)) (F_0(x,y)\wedge g_0(x,y)=z)\vee F_0(x,y)\wedge g_1(x,y)=z),
$$
which is a $\Pi^1_1$ definition of $\mathcal A$ by the Spector-Gandy theorem (see e.g. \cite[Corollary 4.19]{mawe85}).

Since the above proof clearly relativizes to a parameter $a$, we are done.
\end{proof}

\begin{remark}
In \cite{miller89}, Arnold Miller proved, using a coding argument, that if $V=L$ then there is a $\Pi^1_1$ mad family. Since it is routine to check that $V=L$ implies the existence of a $\Sigma^1_2$ mad family, the above also provides a different proof of Miller's theorem.
\end{remark}

\bibliographystyle{asl}
\bibliography{sigma_12mad}

\end{document}